\documentclass[a4paper,10pt]{article}
\usepackage[english]{babel}
\usepackage[utf8]{inputenc}
\usepackage[left=3.2cm, right=3.2cm, top=4cm, bottom=4cm]{geometry}

\usepackage[nottoc]{tocbibind}
\usepackage{amsthm}
\usepackage{bm}
\usepackage{hyperref}
\usepackage{enumerate}
\usepackage{graphicx, psfrag, amscd, amssymb,amsmath,amsthm, amsfonts}
\usepackage{mathrsfs}
\usepackage{cite}
\usepackage{titling}
\predate{}
\postdate{}

\newtheorem{theorem}{Theorem}[section]           
\newtheorem{corollary}[theorem]{Corollary}       
\newtheorem{lemma}[theorem]{Lemma}               
\newtheorem{conjecture}[theorem]{Conjecture}     
\theoremstyle{definition} 
\newtheorem{definition}[theorem]{Definition}     
\theoremstyle{remark}
\newtheorem{remark}[theorem]{Remark}             


\newcommand{\te}{\textrm}



\title{Topics on Pedal Polygons}
\author{Chia-An Hsu, Hsin-Chuang Chou, Chen-Rui Liu,\\ Chih-Hsuan Liang, and Yu-Wei Chang}
\date{}

\begin{document}

\maketitle

\abstract{In this paper, we study several topics on pedal polygons. First, we prove the existence for pedal centers of triangles in a new way. From its proof, we find that the sum of area of outer and inner polygons is invariant under rotation. Finally, we investigate when a polygon will be a pedal polygon of another one.}



\section{Introduction}
There are several results about pedal polygons. For example, for any two triangles $V$ and $W$, there exists a point $X$ such that the pedal triangle of $V$ with respect to $X$ is similar to $W$. In this case, we call $X$ a ``\emph{pedal center of $V$ with respect to $W$}''. In \cite{ganchev2012points}, Ganchev, Ahmed, and Petkova proved there are $12$ such points for any pair of triangles, and in \cite{chen2015tripod}, Chen and Lourie gave a different proof. This paper consists of two parts. In the first part, we prove the existence of pedal centers between any two triangles and study an invariant obtained from the proof. In the second part, for integer $n\ge 3$, we will define an equivalence relation on $n$-gons to study the pedal relations.

\begin{definition}
In $\mathbb{R}^2$: 
\begin{enumerate}
    \item For points $V_1,V_2$, the line passes through $V_1,V_2$ is denoted by $L(V_1,V_2)$. The segment with endpoints $V_1,V_2$ and its length are both denoted by $\ell(V_1, V_2)$.
    \item For integer $n\ge 3$, an $n$-gon is an $n$-tuple of points in $\mathbb{R}^2$. The $i^{\rm{th}}$ component $V_i$ of an $n$-gon is called the $i^{\rm{th}}$ vertex of this polygon, and in this case such $n$-gon is denoted by $(V_1 V_2\cdots V_n)$. Moreover, we define $V_{i}=V_{i(mod\ n)}$ for all integer $i$.
    \item  For a polygon $V=(V_1V_2\cdots V_n)$ and a point $X$, the \textit{pedal polygon} of $V$ with respect to $X$ is the polygon $\mathcal{P}(V,X)=(P_1P_2\cdots P_n)$ whose vertex $P_i$ is the foot of perpendicular from $X$ to $L(V_i,V_{i+1})$ for all $i\in\{1,2,\cdots,n\}$ (see figure $1$).
    \item For a polygon $V=(V_1V_2\cdots V_n)$ and a point $X$, the \textit{antipedal polygon} of $V$ with respect to $X$ is the polygon $\mathcal{AP}(V,X)=(A_1A_2\cdots A_n)$ where $L(A_i,A_{i+1})$ is perpendicular to $L(X,V_{i})$ and passes through point $V_i$ for all $i\in\{1,2,\cdots,n\}$ (see figure $2$).
\end{enumerate}
\end{definition}    

\begin{figure}[htp]
\begin{center}
\begin{minipage}[t]{0.48\textwidth}
\centering
\includegraphics[width=3.5cm]{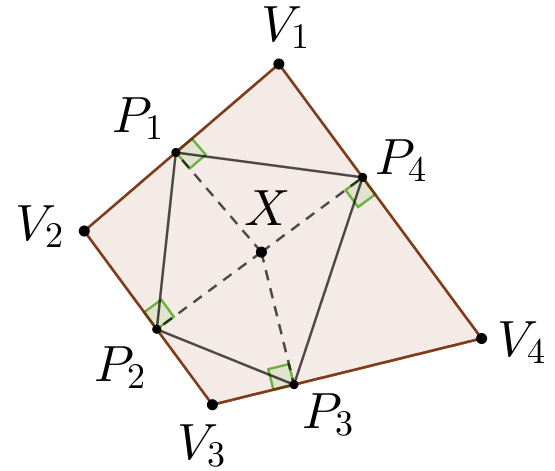}
\caption{Pedal polygon}
\end{minipage}
\begin{minipage}[t]{0.48\textwidth}
\centering
\includegraphics[width=3.3cm]{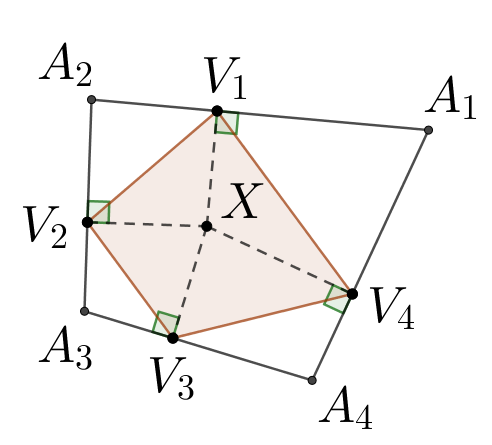}
\caption{Antipodel polygon}
\end{minipage}
\end{center}
\end{figure}

\section{Existence of pedal centers for triangles}
\begin{definition}
    Let $\theta\in\mathbb{R}$ and let $V=(V_1V_2\cdots V_n),\  W=(W_1W_2\cdots W_n)$ be $n$-gons. The \textit{outer $n$-gon} $\mathcal{O}(W,V,\theta)=(O_1 O_2\cdots O_n)$ of $W$ with respect to $V$ and $\theta$ is characterized by the following conditions (see figure $3$):
    \begin{enumerate}
        \item $\angle{O_i}=\angle{V_i}$ for all $i\in\{1,2,\cdots,n\}$.
        \item $L(O_i,O_{i+1})$ passes through $W_i$ for all $i\in\{1,2,\cdots,n\}$.
        \item Angle between vector $e_1=(1,0)$ and $\overrightarrow{O_1 O_n}$ is $\theta$.
    \end{enumerate}
    Also, we define the \textit{inner $n$-gon} $\mathcal{I}(W,V,\theta)=(I_1 I_2\cdots I_n)$ of $W$ with respect to $V$ and $\theta$ to be the $n-$gon satisfies the following conditions (see figure $4$):
    \begin{enumerate}
        \item $L(I_i,I_{i+1})$ is perpendicular to $L(O_i,O_{i+1})$ for all $i\in\{1,2,\cdots,n\}$.
        \item $L(I_i,I_{i+1})$ passes through $W_i$ for all $i\in\{1,2,\cdots,n\}$.
    \end{enumerate}
\end{definition}

\begin{figure}[htp]
\begin{center}
\begin{minipage}[t]{0.48\textwidth}
\centering
\includegraphics[width=5.7cm]{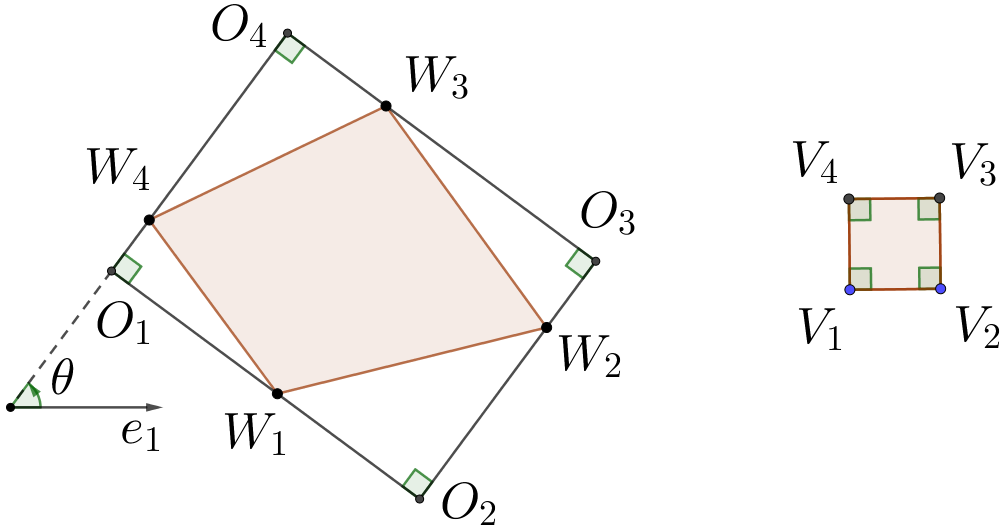}
\caption{Outer polygon $\mathcal{O}(W,V,\theta)$}
\end{minipage}
\begin{minipage}[t]{0.48\textwidth}
\centering
\includegraphics[width=5.7cm]{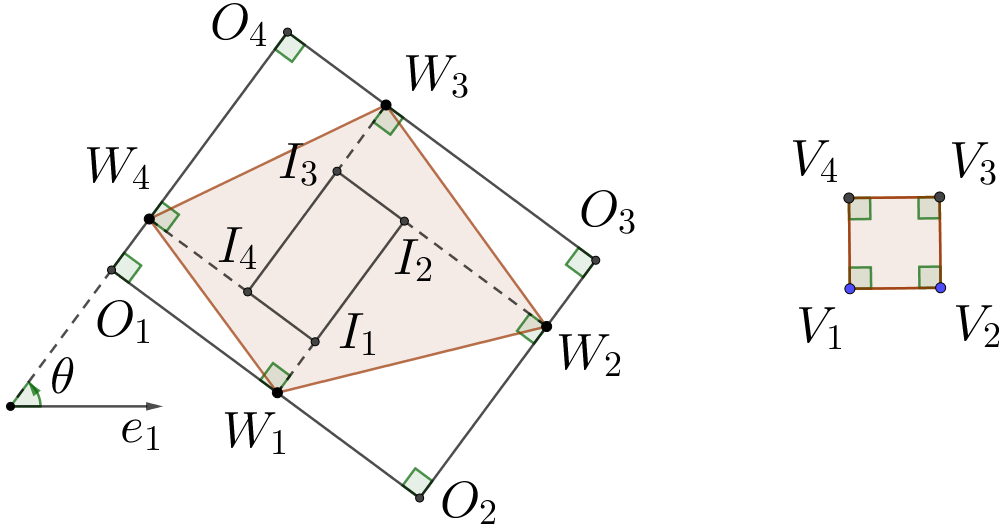}
\caption{Inner polygon $\mathcal{I}(W,V,\theta)$}
\end{minipage}
\end{center}
\end{figure}

\begin{definition}
    For an $n$-gon $V=(V_1V_2\cdots V_n)$ where $V_i=(x_i,y_i)$ for all $i\in\{1,2,\cdots,n\}$, the area of $V$ is
    \[\mathcal{A}(V):=\frac{1}{2}\sum_{i=1}^{n}\det\left(V_i, V_{i+1}\right)=\frac{1}{2}\sum_{i=1}^{n}\det\begin{pmatrix}x_i&x_{i+1}\\y_i&y_{i+1}\end{pmatrix}.\]
    If $\mathcal{A}(V)$ is positive, we say $V$ is oriented counterclockwise; if $\mathcal{A}(V)$ is negative, we say $V$ is oriented clockwise.
\end{definition}

\begin{definition}
    In $\mathbb{R}^2$, for a line $L$ and a unit normal vector $\overrightarrow{v}$ of $L$, the \textit{signed distance} from a point $X$ to $L$ with respect to $\overrightarrow{v}$ is 
    \[d(X,L,{\overrightarrow{v}})=\overrightarrow{v} \cdot \overrightarrow{XY}\]
    where $Y$ is an arbitrary point on $L$.
\end{definition}

\begin{lemma}
    \label{lemma:signed distance}
    Given a triangle $V=(V_1V_2V_3)$ with counterclockwise orientation and a point $X$. If  $\overrightarrow{v_1},\overrightarrow{v_2},\overrightarrow{v_3}$ are outward unit normal vectors of $V$, then the area of $V$ is
    \[\mathcal{A}(V)=\frac{1}{2}\sum_{i=1}^3 {\ell(A_i,A_{i+1})}\cdot d(X,L(V_i,V_{i+1}),{\overrightarrow{v_i}}).\]
\end{lemma}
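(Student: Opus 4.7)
The plan is to decompose the triangle $V$ into three oriented sub-triangles sharing the common apex $X$, and then identify each sub-triangle's signed area with half the product of an edge length of $V$ and the corresponding signed distance from $X$. I interpret $\ell(A_i,A_{i+1})$ as the side length $\ell(V_i,V_{i+1})$ of $V$, since no points $A_i$ have been introduced in this lemma and this is the only reading for which the stated identity can hold.

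For the decomposition, I apply the shoelace area formula from the definition of $\mathcal{A}$ to the ordered triangle $(X,V_i,V_{i+1})$, obtaining
\[
\mathcal{A}(X V_i V_{i+1}) = \tfrac{1}{2}\bigl[\det(X,V_i)+\det(V_i,V_{i+1})+\det(V_{i+1},X)\bigr].
\]
Summing over $i\in\{1,2,3\}$ and using $\det(V_{i+1},X)=-\det(X,V_{i+1})$, the terms containing $X$ telescope to zero, leaving $\sum_{i=1}^3\mathcal{A}(X V_i V_{i+1})=\tfrac{1}{2}\sum_i\det(V_i,V_{i+1})=\mathcal{A}(V)$. Importantly, this identity is purely algebraic and makes no assumption about whether $X$ lies inside $V$.

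Next, I identify each signed sub-area with the desired product. Fix $i$, write $V_{i+1}-V_i=(a,b)$, and put $V_i-X=(p,q)$. Since $V$ is counterclockwise, the outward unit normal to the edge $V_i V_{i+1}$ is $\overrightarrow{v_i}=(b,-a)/\sqrt{a^2+b^2}$, obtained by rotating the edge vector by $-90^\circ$. Taking $Y=V_i$ in the definition of signed distance gives $d(X,L(V_i,V_{i+1}),\overrightarrow{v_i})=(bp-aq)/\sqrt{a^2+b^2}$, and therefore
\[
\tfrac{1}{2}\,\ell(V_i,V_{i+1})\cdot d(X,L(V_i,V_{i+1}),\overrightarrow{v_i}) = \tfrac{1}{2}(bp-aq) = \tfrac{1}{2}\det(V_i-X,\,V_{i+1}-X) = \mathcal{A}(X V_i V_{i+1}).
\]
Summing this over $i$ and combining with the previous step yields the lemma.

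The main obstacle, modest as it is, lies in sign-bookkeeping: one must check that the CCW assumption on $V$ really forces $\overrightarrow{v_i}=(b,-a)/\sqrt{a^2+b^2}$ to be the \emph{outward} (rather than inward) normal, and that the cyclic order $(X,V_i,V_{i+1})$ used in the shoelace formula produces signed sub-areas whose signs match those of the corresponding signed distances uniformly in the position of $X$. Once these conventions are aligned, no separate case analysis is needed for $X$ inside versus outside $V$.
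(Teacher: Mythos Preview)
Your proof is correct. The paper actually states this lemma without proof, treating it as an elementary fact, so there is no argument to compare against; your decomposition $\mathcal{A}(V)=\sum_i \mathcal{A}(X\,V_i\,V_{i+1})$ together with the identification $\mathcal{A}(X\,V_i\,V_{i+1})=\tfrac{1}{2}\,\ell(V_i,V_{i+1})\cdot d(X,L(V_i,V_{i+1}),\overrightarrow{v_i})$ is exactly the standard justification one would expect. Your reading of $\ell(A_i,A_{i+1})$ as a typo for $\ell(V_i,V_{i+1})$ is also the only sensible one, and your remark that no case split on the position of $X$ is needed---because both the telescoping sum and the determinant identity are purely algebraic---is the right observation.
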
  
    
\begin{theorem}
\label{thm:pedal center}
    For triangles $V=(V_1V_2V_3),W=(W_1W_2W_3)$, there is a point $X$ such that the pedal triangle $\mathcal{P}(V,X)$ is similar to $W$.
\end{theorem}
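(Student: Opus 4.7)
The plan is to reduce the theorem to locating an angle $\theta$ for which the inner polygon $\mathcal{I}(W,V,\theta)$ collapses to a single point $X^{*}$. If such a $\theta$ exists, then the three lines defining $\mathcal{I}(W,V,\theta)$ all pass through $X^{*}$; by the definition of the inner polygon these are exactly the perpendiculars to the sides $L(O_i,O_{i+1})$ of $\mathcal{O}(W,V,\theta)$ through the points $W_i$, so $W_i$ is the foot of the perpendicular from $X^{*}$ onto $L(O_i,O_{i+1})$ for each $i$, i.e.\ $\mathcal{P}(\mathcal{O}(W,V,\theta), X^{*}) = W$. Since $\mathcal{O}(W,V,\theta)$ is similar to $V$ by construction, the similarity that carries $\mathcal{O}(W,V,\theta)$ to $V$ sends $X^{*}$ to a point $X$ with $\mathcal{P}(V,X)$ similar to $W$, which is the content of the theorem.

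To produce such a $\theta$, I would argue via signed areas. First, $\mathcal{I}(W,V,\theta)$ is itself similar to $V$: its sides are obtained from those of $\mathcal{O}(W,V,\theta)$ by a common $\pi/2$ rotation, which preserves consecutive angles. Hence $\mathcal{I}(W,V,\theta)$ collapses to a point precisely when $\mathcal{A}(\mathcal{I}(W,V,\theta)) = 0$. The central technical step is then to establish the area invariance alluded to in the abstract,
\[
\mathcal{A}(\mathcal{O}(W,V,\theta)) + \mathcal{A}(\mathcal{I}(W,V,\theta)) = C,
\]
for a constant $C$ depending only on $V$ and $W$. I would prove this by applying Lemma \ref{lemma:signed distance} to each triangle about a common auxiliary point and exploiting the fact that the $i$-th sides of $\mathcal{O}$ and $\mathcal{I}$ are perpendicular and both pass through $W_i$; the signed-distance contributions should pair up so that the $\theta$-dependent parts cancel, leaving only a quantity depending on $V$ and the $W_i$.

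Finally I would locate the degenerate $\theta^{*}$ by continuity. Observe that $\mathcal{I}(W,V,\theta)$ and $\mathcal{O}(W,V,\theta+\pi/2)$ have the same three side-lines (both are the lines through the $W_i$ whose directions are obtained by rotating those of $\mathcal{O}(W,V,\theta)$ by $\pi/2$), so $\mathcal{A}(\mathcal{I}(\theta)) = \mathcal{A}(\mathcal{O}(\theta+\pi/2))$, and the invariance becomes $\mathcal{A}(\mathcal{O}(\theta)) + \mathcal{A}(\mathcal{O}(\theta+\pi/2)) = C$. As $\theta$ varies, the three sides of $\mathcal{O}(W,V,\theta)$ rotate uniformly around the fixed points $W_i$; by a Miquel-type observation (the pairwise intersections trace circles through the $W_i$ sharing a common second intersection, by the inscribed-angle theorem) they become concurrent at some angle $\theta_M$, at which $\mathcal{A}(\mathcal{O}(\theta_M)) = 0$ and the outer polygon collapses to the Miquel point. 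By the swap, $\mathcal{I}(W,V,\theta_M-\pi/2)$ equals this collapsed polygon, producing $\theta^{*} = \theta_M-\pi/2$ and the required $X^{*}$.

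The main obstacle is the area invariance: the pairing argument via Lemma \ref{lemma:signed distance} requires careful bookkeeping of signed distances and outward normals through the $W_i$, and verifying that all $\theta$-dependent terms actually cancel is the real content of the proof. The Miquel-type step and the similarity reduction are comparatively routine.
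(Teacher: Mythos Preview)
Your reduction in the first paragraph---find $\theta$ with $\mathcal{I}(W,V,\theta)$ a single point $X^{*}$, read off $\mathcal{P}(\mathcal{O}(W,V,\theta),X^{*})=W$, then transport by the similarity $\mathcal{O}\to V$---is exactly the geometric skeleton the paper also relies on. From there, however, the two arguments diverge.

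The paper does \emph{not} pass through the area invariance to prove Theorem~\ref{thm:pedal center}. Instead it fixes coordinates for $W$, uses Lemma~\ref{lemma:signed distance} with base point $W_1$ to write $\mathcal{A}(\mathcal{I}(W,V,\theta))=\tfrac12 f(\theta)\,\mathcal{M}(\theta)$ explicitly, and then observes that $\mathcal{M}(\theta)=\mathbb{M}'(\theta)$ for a concrete smooth $2\pi$-periodic function $\mathbb{M}$. Any such $\mathbb{M}$ has a critical point, so $\mathcal{M}(\theta_0)=0$ for some $\theta_0$, and the inner triangle collapses. The area invariance $\mathcal{A}(\mathcal{O})+\mathcal{A}(\mathcal{I})=c$ is recorded afterwards as a separate result (Theorem~\ref{thm:invariant of area}), motivated by the computation but not used in the existence proof.

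Your route is genuinely different: you appeal to a Miquel/pivot argument to show that the loci of the $O_i$ are circles through consecutive $W_i$'s with a common second intersection, hence there is a $\theta_M$ at which $\mathcal{O}(W,V,\theta_M)$ collapses; the swap $\mathcal{I}(W,V,\theta)=\mathcal{O}(W,V,\theta+\tfrac{\pi}{2})$ (same side-lines, since each is the $\tfrac{\pi}{2}$-rotation of the other through the corresponding $W_i$) then yields the collapse of $\mathcal{I}$ at $\theta_M-\tfrac{\pi}{2}$. This is a clean, more classical-geometry alternative to the paper's ``antiderivative of a periodic function'' trick. Note, though, that in your own argument the area invariance you call the ``central technical step'' is never actually used: the identity $\mathcal{A}(\mathcal{O}(\theta))+\mathcal{A}(\mathcal{O}(\theta+\tfrac{\pi}{2}))=C$ by itself does not force either summand to vanish, and it is the Miquel step alone that produces $\theta^{*}$. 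So you can drop that detour entirely; what remains is a correct proof, different from the paper's, trading an analytic derivative computation for an inscribed-angle/concurrence argument (with the usual care about directed angles when verifying that all three lines pass through the Miquel point simultaneously).
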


\begin{proof}
   Without loss of generality, let the coordinates of $W$ are $W_1(0,0)$, $W_2(1,0)$ and $W_3(a,b)$. By the law of sines, there is a function $f:\mathbb{R}\to\mathbb{R}$ such that the inner polygon $\mathcal{I}(W,V,\theta)=(I_1I_2I_3)$ satisfies
    \[\ell(I_2,I_3)=f(\theta)\sin{\angle{V_1}},\ \ell(I_1,I_3)=f(\theta)\sin{\angle{V_2}},\ \textrm{and}\ \ell(I_1,I_2)=f(\theta)\sin{\angle{V_3}}.\] Applying \ref{lemma:signed distance} with $X=W_1(0,0)$, we know the area of $\mathcal{I}(W,V,\theta)$ is
    \[\frac{1}{2}f(\theta)[\sin{\angle{V_1}\cdot\cos(\theta-\angle{V_1}-\angle{V_2})}+\sin{\angle{V_2}\cdot(a\cos{\theta}+b\sin{\theta})}]=:\frac{1}{2}f(\theta)\mathcal{M}(\theta).\]
    Note that $\mathcal{M}(\theta)=\mathbb{M}'(\theta)$, where
    \[\mathbb{M}(\theta)=\sin{\angle{V_1}\cdot\sin(\theta-\angle{V_1}-\angle{V_2})}+\sin{\angle{V_2}\cdot(a\sin{\theta}-b\cos{\theta})}.\] Since $\mathbb{M}(\theta)$ is a differentiable periodic function, it has at least two extreme points. Thus, there exists $\theta_0\in\mathbb{R}$ such that $\mathcal{M}(\theta_0)=\mathbb{M}'(\theta_0)=0$, and hence $\mathcal{A}(\mathcal{I}(W,V,\theta_0))$ vanishes which implies the inner triangle degenerates to a point. 
\end{proof}

\vspace{0.2cm}

\begin{remark}
The idea of proof in \ref{thm:pedal center} is adapted from \cite{tabachnikov1995four} in which such technique is applied to find tripod configurations of a curve.
\end{remark}

\vspace{0.2cm}

\begin{remark}
    From the proof of theorem \ref{thm:pedal center}, we also find a relation between the area of outer and inner polygons.
\end{remark}

\vspace{0.2cm}

\begin{theorem}
\label{thm:invariant of area}
    For $n$-gons $V=(V_1 V_2\cdots V_n),W=(W_1 W_2\cdots W_n)$, there is a constant $c$ such that 
    \[\mathcal{A}(\mathcal{O}(W,V,\theta))+\mathcal{A}(\mathcal{I}(W,V,\theta))=c \quad \te{for $\theta\in\mathbb{R}$}.\]
\end{theorem}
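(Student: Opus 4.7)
My plan is to exploit a circle-geometric observation. For each index $i$, I claim that the four points $W_{i-1}$, $O_i$, $W_i$, $I_i$ are concyclic on a circle $C_i$ having $O_iI_i$ as a diameter. Indeed, the inner side $L(I_{i-1},I_i)$ through $W_{i-1}$ is, by the definition of $\mathcal{I}$, perpendicular to the outer side $L(O_{i-1},O_i)$ through $W_{i-1}$, so $\angle O_iW_{i-1}I_i = \pi/2$; the analogous statement at $W_i$ gives $\angle O_iW_iI_i = \pi/2$. By the converse of Thales' theorem, both $W_{i-1}$ and $W_i$ lie on the circle with diameter $O_iI_i$.

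The decisive point is that the circle $C_i$ does not depend on $\theta$. Indeed, $W_{i-1}$ and $W_i$ are fixed, while the radius must equal $|W_{i-1}W_i|/(2\sin\angle V_i)$ by the law of sines applied to $\triangle W_{i-1}O_iW_i$, whose angle at $O_i$ has absolute sine equal to $\sin\angle V_i$. Hence the center $c_i$ and radius of $C_i$ are $\theta$-independent.

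Next I would track the motion of $O_i$ on $C_i$. The chord $W_iO_i$ lies along the outer side $L(O_i, O_{i+1})$, which by construction rotates by $d\theta$ about $W_i$ as $\theta$ increases by $d\theta$. The inscribed angle theorem then forces $O_i$ to sweep out a central arc of $2\,d\theta$ along $C_i$. In complex coordinates this reads
\[
O_i(\theta) = c_i + q_i\,e^{2i\theta}, \qquad I_i(\theta) = c_i - q_i\,e^{2i\theta},
\]
for some $\theta$-independent $q_i \in \mathbb{C}$, the second formula coming from the fact that $I_i$ is antipodal to $O_i$ on $C_i$.

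Finally I would substitute these into the complex form of the area formula from Definition~2.3, namely $\mathcal{A}(V) = \tfrac12\,\mathrm{Im}\sum_j \bar{V}_j V_{j+1}$. A direct expansion collapses the cross-terms:
\[
\bar{O}_jO_{j+1} + \bar{I}_jI_{j+1} = 2\,\bar{c}_jc_{j+1} + 2\,\bar{q}_jq_{j+1},
\]
so that the factors $e^{\pm 2i\theta}$ cancel and
\[
\mathcal{A}(\mathcal{O}(W,V,\theta)) + \mathcal{A}(\mathcal{I}(W,V,\theta)) = \mathrm{Im}\sum_j\bigl(\bar{c}_j c_{j+1} + \bar{q}_j q_{j+1}\bigr)
\]
is patently independent of $\theta$. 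The main subtlety I anticipate is fixing a uniform orientation convention for the angles $\angle V_i$ and for the sense of the inscribed-angle rotation so that the identifications of $C_i$, the ``$2\theta$'' angular speed, and the antipodal pair $(O_i,I_i)$ hold with consistent signs for every $i$; once the conventions are set, the cancellation is automatic.
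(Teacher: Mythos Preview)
Your argument is correct and genuinely different from the paper's. The paper proceeds by brute-force coordinates: it writes the slopes $s_i=\tan(\theta-\sum_{j\le i}\angle V_j)$ and $t_i=-1/s_i$, solves for $O_i,I_i$ as intersections of consecutive side-lines, and then expands $\det(O_i,O_{i+1})+\det(I_i,I_{i+1})$ term by term to obtain an explicit $\theta$-free expression. Your route is more structural: you recognise that $O_i$ and $I_i$ are antipodal on a \emph{fixed} circle $C_i$ through $W_{i-1},W_i$, whence $O_i=c_i+q_ie^{2i\theta}$ and $I_i=c_i-q_ie^{2i\theta}$, and the $e^{\pm 2i\theta}$ cross-terms in $\bar O_jO_{j+1}+\bar I_jI_{j+1}$ cancel automatically. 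Your approach \emph{explains} the invariance (it is the parallelogram identity for antipodal pairs), while the paper's computation \emph{delivers} the constant $c$ explicitly in terms of the $W_i$ and the angles $\angle V_i$.

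Two small points to tighten. First, from ``radius of $C_i$ is $\theta$-independent'' you jump to ``center $c_i$ is $\theta$-independent''; but two circles of that radius pass through $W_{i-1},W_i$. You need either a continuity argument (the midpoint $(O_i+I_i)/2$ is continuous in $\theta$ and takes values in a two-point set, hence is constant) or the directed-angle version of the inscribed-angle theorem that you allude to at the end. Second, you use $i$ both as an index and as $\sqrt{-1}$ in $e^{2i\theta}$; renaming one of them would avoid ambiguity.
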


\begin{proof}
Let the coordinates of $W$ be $W_i(x_i,y_i)$ for all $i\in\{1,2,\cdots,n\}$. It is enough to show that $\mathcal{A}(O O_i O_{i+1})+\mathcal{A}(O I_i I_{i+1})$ is constant in $\theta\in\mathbb{R}$ for all $i\in\{1,2,\cdots,n\}$ where $O$ is the origin $(0,0)$, $(O_1O_2\cdots O_n)=\mathcal{O}(W,V,\theta)$ and $(I_1I_2\cdots I_n)=\mathcal{I}(W,V,\theta)$. Note that the slope of $L(O_i,O_{i+1})$ is $s_i=\tan(\theta-\sum_{j=1}^i \angle{V_j})$ and the slope of $L(I_i,I_{i+1})$ is $t_i=-1/{s_i}$ (see figure $5$). Hence we obtain the parametrizations
\[O_i(\frac{s_{i-1}x_{i-1}-y_{i-1}-s_i x_i+y_i}{s_{i-1}-s_i},\frac{-s_{i-1}s_i x_i+s_{i-1}y_i+s_{i-1}s_i x_{i-1}-s_i y_{i-1}}{s_{i-1}-s_i}) \quad\textrm{and}\]
\[I_i(\frac{t_{i-1}x_{i-1}-y_{i-1}-t_i x_i+y_i}{t_{i-1}-t_i},\frac{-t_{i-1}t_i x_i+t_{i-1}y_i+t_{i-1}t_i x_{i-1}-t_i y_{i-1}}{t_{i-1}-t_i}).\]
Therefore, for $\theta\in\mathbb{R}$, $M(\theta):=\mathcal{A}(O O_i O_{i+1})+\mathcal{A}(O I_i I_{i+1})$ satisfies
\begin{equation*}
    \begin{aligned}
    2M(\theta)&=\det(O_i,O_{i+1})+\det(I_{i},I_{i+1})\\
    &=(x_i^2+y_i^2)\frac{\sin{(\angle{V_i}+\angle{V_{i+1}})}}{\sin{\angle{V_i}}\sin{\angle{V_{i+1}}}}+\sum_{j=i}^{i+1}  \left[(x_{j-1}y_j-x_j y_{j-1})+\frac{-x_{j-1}x_j -y_{j-1}y_j}{\tan{\angle{V_j}}}\right] 
    \end{aligned}
\end{equation*}
which implies $M(\theta)$ is a constant and the proof is completed.
\end{proof}

\begin{figure}[htp]
    \centering
    \includegraphics[width=9cm]{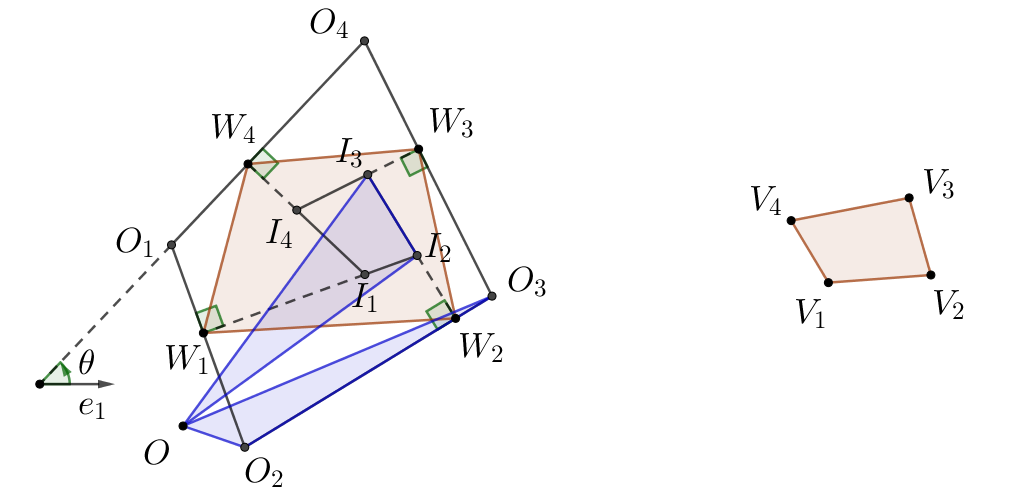}
    \caption{$\mathcal{A}(O O_i O_{i+1})+\mathcal{A}(O I_i I_{i+1})$}
    \label{fig:recursion of N(v)}
\end{figure}

\newpage
\begin{lemma}
\label{lem:similarity of O and I}
For any $n$-gons $V=(V_1V_2\cdots V_n)$, $W=(W_1W_2\cdots W_n)$, if $\mathcal{I}(W,V,\theta_0)$ is a point for some $\theta_0\in\mathbb{R}$, then $\mathcal{O}(W,V,\theta)$ is similar to $\mathcal{I}(W,V,\rho)$ for all $\theta,\rho\in\mathbb{R}$.
\end{lemma}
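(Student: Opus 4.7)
The plan is to first reduce the claim to a statement about outer polygons only, and then exhibit an explicit spiral similarity formula showing that every outer polygon in the family is similar to one fixed polygon.

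The first step is the observation that $\mathcal{I}(W,V,\theta)=\mathcal{O}(W,V,\theta+\pi/2)$. Indeed, the slopes $s_i(\theta)=\tan(\theta-\sum_{j=1}^i\angle V_j)$ of the sides of $\mathcal{O}(W,V,\theta)$ and the slopes $t_i(\theta)=-1/s_i(\theta)=\tan(\theta+\pi/2-\sum_{j=1}^i\angle V_j)=s_i(\theta+\pi/2)$ of the sides of $\mathcal{I}(W,V,\theta)$ are related by the shift $\theta\mapsto\theta+\pi/2$; since in both families the $i$-th side passes through $W_i$, corresponding sides are literally the same line, so the vertices (intersections of consecutive sides) coincide as well. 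This reduces the lemma to showing that $\mathcal{O}(W,V,\theta)$ and $\mathcal{O}(W,V,\theta')$ are similar for all $\theta,\theta'\in\mathbb{R}$.

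Next, set $\beta=\theta_0+\pi/2$, so that by the hypothesis $\mathcal{O}(W,V,\beta)$ collapses to a single point $P$. I put $P$ at the origin and use complex coordinates. Since the $i$-th side of $\mathcal{O}(W,V,\beta)$ passes through both $W_i$ and $P=0$, the argument of $W_i$ agrees with $\alpha_i:=\beta-\sum_{j=1}^i\angle V_j$ modulo $\pi$; write $W_i=r_ie^{i\alpha_i}$ with $r_i=|PW_i|$. For general $\theta$ set $\lambda=\theta-\beta$, so the $i$-th side of $\mathcal{O}(W,V,\theta)$ is the line through $W_i$ with direction angle $\alpha_i+\lambda$, conveniently parametrized as $e^{i\alpha_i}(r_i+s\,e^{i\lambda})$. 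Solving the linear system for the intersection $O_i(\theta)$ of the $(i-1)$-th and $i$-th sides, I expect the key calculation to yield
\[O_i(\theta)=\sin\lambda\cdot e^{i\lambda}\cdot Z_i,\qquad Z_i:=\frac{r_i\,e^{i\alpha_{i-1}}-r_{i-1}\,e^{i\alpha_i}}{\sin(\alpha_i-\alpha_{i-1})}.\]
The algebraic engine behind this factorization is the identity
\[\sin\mu-\sin(\lambda+\mu)\,e^{i\lambda}=-\sin\lambda\,e^{i(\lambda+\mu)},\]
which is immediate from writing each sine as $(e^{ix}-e^{-ix})/(2i)$.

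The punchline is that $Z_i$ depends only on $W$ and $V$, so the scalar $c(\lambda)=\sin\lambda\cdot e^{i\lambda}$ is independent of the vertex index. Hence $\mathcal{O}(W,V,\theta)$ is the image of the fixed polygon $(Z_1,\dots,Z_n)$ under the spiral similarity $z\mapsto c(\lambda)\,z$ centered at $P$, and for any two parameters $\theta,\theta'$ with $c(\lambda),c(\lambda')\ne 0$ the polygons $\mathcal{O}(W,V,\theta)$ and $\mathcal{O}(W,V,\theta')$ are related by the spiral similarity of ratio $c(\lambda')/c(\lambda)$, so they are similar. Combined with the identification in the first step this gives the lemma. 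The only nontrivial step is the trigonometric factorization producing $O_i(\theta)=c(\lambda)Z_i$; once that is in hand the conclusion is immediate.
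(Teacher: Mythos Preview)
Your argument is correct and reaches the conclusion, but it proceeds quite differently from the paper. The paper argues synthetically: since each vertex $O_i(\theta)$ sees the fixed segment $W_{i-1}W_i$ under the constant angle $\angle V_i$, its locus is a circle $C_i$; because the sides of the inner polygon are perpendicular to those of the outer, $I_i(\theta)$ lies on the same circle and $\ell(O_i(\theta),I_i(\theta))$ is a diameter. The hypothesis that $\mathcal{I}(W,V,\theta_0)$ collapses to a point $X$ then says $X\in\bigcap_i C_i$, and inscribed angles in $C_i$ and $C_{i+1}$ give $(XO_iO_{i+1})(\theta)\sim(XI_iI_{i+1})(\rho)$, piecing together to the global similarity.

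Your route is analytic: you first identify $\mathcal{I}(W,V,\theta)=\mathcal{O}(W,V,\theta+\pi/2)$ via the slopes, then work in complex coordinates centered at the collapsed point $P$ and derive the explicit factorization $O_i(\theta)=\sin\lambda\,e^{i\lambda}\,Z_i$. This is the same center of spiral similarity the paper finds geometrically ($P=X$), but you obtain the exact similarity ratio $c(\lambda)=\sin\lambda\,e^{i\lambda}$ for free, which the paper does not. Conversely, the paper's argument is computation-free and makes the role of the inscribed-angle circles transparent; your approach hides this geometry in the trigonometric identity. Two small points you skate over: $r_i$ must be allowed to be a signed real (since $W_i$ may lie on either ray of direction $\alpha_i$), and the values $\lambda\in\pi\mathbb{Z}$ where $c(\lambda)=0$ correspond to the polygon collapsing to $P$, which is still ``similar'' with ratio zero. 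Neither affects the validity of the proof.
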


\begin{proof}
For $\theta\in\mathbb{R}$, let $\mathcal{O}(W,V,\theta)=(O_1O_2\cdots O_n)$, $\mathcal{I}(W,V,\theta)=(I_1I_2\cdots I_n)$ where $O_i(\theta)$, $I_i(\theta)$ are functions of $\theta$ for all $i\in\{1,2,\cdots,n\}$. Since $\angle{O_i(\theta)}=\angle{I_i(\theta)}=\angle{V_i}$ for all $\theta\in\mathbb{R}$, the image of $O_i:\mathbb{R}\to\mathbb{R}^2$ is a circle $C_i$ whose diameter is $\ell(O_i(\theta_1),I_i(\theta_1))$ for any $\theta_1\in\mathbb{R}$. Moreover, $\mathcal{I}(W,V,\theta_0)$ is a point, so $\bigcap_i{C_i}\neq\phi$. If $X\in\bigcap_i{C_i}$, it is easy to see that $(X O_i O_{i+1})(\theta)$ is similar to $(X I_i I_{i+1})(\rho)$ for all $\theta,\rho\in\mathbb{R}$ and $i\in\{1,2,\cdots,n\}$, and hence $\mathcal{O}(W,V,\theta)$ is similar to $\mathcal{I}(W,V,\rho)$ (see figure $6$).
\end{proof}

\begin{figure}[htp]
\begin{center}
\begin{minipage}[t]{0.48\textwidth}
\centering
\includegraphics[width=5.8cm]{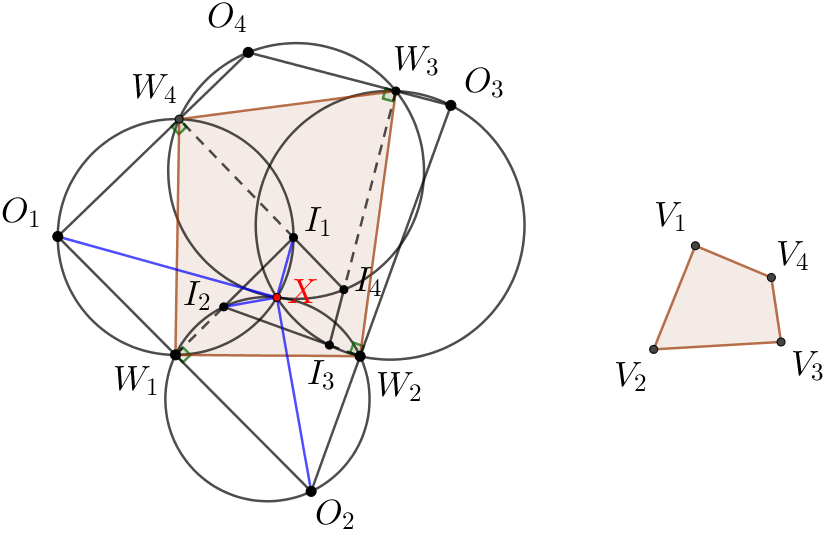}
\caption{$(X O_i O_{i+1})$ and $(X I_i I_{i+1})$}
\end{minipage}
\begin{minipage}[t]{0.48\textwidth}
\centering
\includegraphics[width=5.8cm]{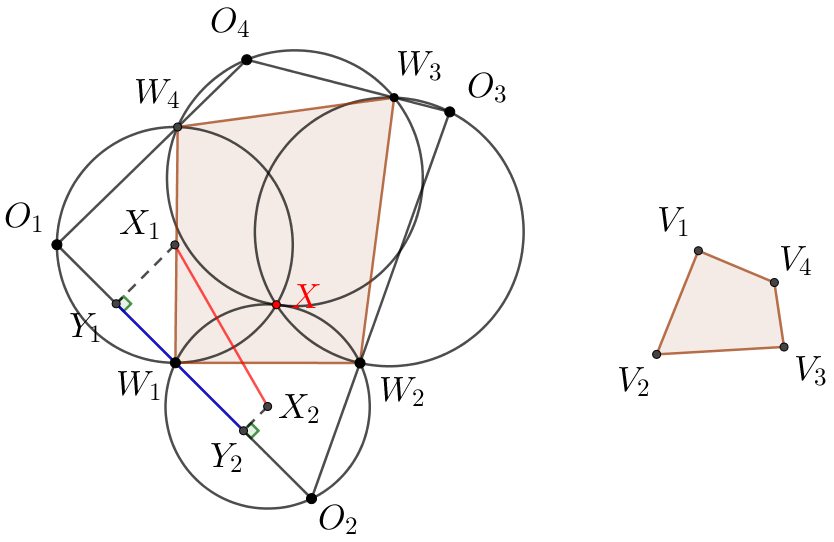}
\caption{Construction of $X_i$ and $Y_i$}
\end{minipage}
\end{center}
\end{figure}

\begin{theorem}
For any $n$-gons $V=(V_1V_2\cdots V_n)$, $W=(W_1W_2\cdots W_n)$, if  $\mathcal{I}(W,V,\theta_0)$ is a point for some $\theta_0\in\mathbb{R}$, then 
\[|\mathcal{A}(\mathcal{O}(W,V,\theta_0))|=\max{\{|\mathcal{A}(\mathcal{O}(W,V,\theta))|:\theta\in\mathbb{R}\}}.\]
\end{theorem}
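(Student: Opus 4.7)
The plan is to combine the invariant of Theorem~\ref{thm:invariant of area} with the similarity structure of Lemma~\ref{lem:similarity of O and I}. Writing $c$ for the constant in Theorem~\ref{thm:invariant of area}, the degeneracy hypothesis $\mathcal{A}(\mathcal{I}(W,V,\theta_0))=0$ gives $c=\mathcal{A}(\mathcal{O}(W,V,\theta_0))$, and the theorem reduces to the inequality $|\mathcal{A}(\mathcal{O}(W,V,\theta))|\le|c|$ for every $\theta\in\mathbb{R}$. The key reduction is to show that in the identity
\[\mathcal{A}(\mathcal{O}(W,V,\theta))+\mathcal{A}(\mathcal{I}(W,V,\theta))=c\]
both summands always share the sign of $c$ (or vanish); the bound then follows, since two real numbers of like sign summing to $c$ each have absolute value at most $|c|$.

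To verify this sign property, I would exploit the construction used in the proof of Lemma~\ref{lem:similarity of O and I}. There the similarity is realized as a spiral similarity centered at the common point $X\in\bigcap_i C_i$, namely a rotation about $X$ composed with a positive dilation, and such a map is orientation-preserving. Consequently it scales signed area by a nonnegative factor. Choosing any $\rho_0$ with $\mathcal{I}(W,V,\rho_0)$ nondegenerate and chaining
\[\mathcal{O}(W,V,\theta)\sim\mathcal{I}(W,V,\rho_0)\sim\mathcal{O}(W,V,\theta_0),\qquad \mathcal{I}(W,V,\theta)\sim\mathcal{I}(W,V,\rho_0)\sim\mathcal{O}(W,V,\theta_0),\]
I obtain nonnegative scalars $\alpha(\theta),\beta(\theta)$ with $\mathcal{A}(\mathcal{O}(W,V,\theta))=\alpha(\theta)c$ and $\mathcal{A}(\mathcal{I}(W,V,\theta))=\beta(\theta)c$. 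Substituting into the invariant gives $\alpha(\theta)+\beta(\theta)=1$ when $c\ne 0$, hence $\alpha(\theta)\in[0,1]$ and $|\mathcal{A}(\mathcal{O}(W,V,\theta))|\le|c|$. The degenerate alternative in which every $\mathcal{I}(W,V,\rho)$ is a point reduces via Theorem~\ref{thm:invariant of area} to $\mathcal{A}(\mathcal{O}(W,V,\theta))\equiv c$, and the bound is immediate.

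The main obstacle I expect is justifying that the similarity in Lemma~\ref{lem:similarity of O and I} is \emph{direct} rather than merely unsigned, since the lemma's statement is phrased only up to similarity. I would address this by looking closely at the circles $C_i$: each pair $(O_i(\theta),I_i(\theta))$ are endpoints of a diameter of $C_i$ through $X$, so by Thales' theorem the angle $\angle O_i(\theta)XI_i(\theta)$ is a right angle for every $i$ and every $\theta$, with its orientation fixed by the common counterclockwise labeling of vertices. The assignment $O_i(\theta)\mapsto I_i(\theta)$ is then implemented by a common quarter-turn about $X$ followed by a positive dilation whose ratio is independent of $i$ (this last point being the content of the triangle-similarity claim in the proof of Lemma~\ref{lem:similarity of O and I}), producing a single spiral similarity and hence a direct one. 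Once this is established, the sign analysis above closes the proof.
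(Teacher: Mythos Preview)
Your strategy is sound and genuinely different from the paper's. The paper does \emph{not} argue directly that $|\mathcal{A}(\mathcal{O}(W,V,\theta))|\le|c|$; instead it proves the converse implication ``$|\mathcal{A}(\mathcal{O}(W,V,\theta_1))|$ maximal $\Rightarrow$ $\mathcal{I}(W,V,\theta_1)$ is a point'' by a geometric argument with the circle centers $X_i$ and their feet $Y_i$ on $L(O_1,O_2)$, and then combines this with the invariant to conclude. Your route, by contrast, attacks the inequality head-on: once the similarities of Lemma~\ref{lem:similarity of O and I} are known to be \emph{direct} (spiral similarities about $X$), the signed areas $\mathcal{A}(\mathcal{O}(W,V,\theta))$ and $\mathcal{A}(\mathcal{I}(W,V,\theta))$ are nonnegative multiples of $c$, and the invariant forces each multiplier into $[0,1]$. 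This is cleaner conceptually and in fact yields more: writing $O_i(\theta)-X$ in complex form one finds $O_i(\theta)-X=\bigl(O_i(\theta_0)-X\bigr)\cos(\theta-\theta_0)\,e^{i(\theta-\theta_0)}$, so $\mathcal{A}(\mathcal{O}(W,V,\theta))=c\cos^2(\theta-\theta_0)$, which both confirms directness and gives the exact area profile. The one place your write-up needs tightening is the justification of directness: the phrase ``diameter of $C_i$ through $X$'' is a slip (the diameter is $O_i(\theta)I_i(\theta)$ and $X$ merely lies on $C_i$), and the appeal to ``orientation fixed by the common counterclockwise labeling'' is suggestive but not a proof that the quarter-turn is the \emph{same} signed rotation for every $i$. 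The complex-number identity above, or an equivalent computation showing $\dfrac{O_i(\theta)-X}{O_i(\theta_0)-X}$ is independent of $i$, closes that gap.
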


\begin{proof}
By \ref{thm:invariant of area}, it is enough to show if $|\mathcal{A}(\mathcal{O}(W,V,\theta_0))|=\max{\{|\mathcal{A}(\mathcal{O}(W,V,\theta))|:\theta\in\mathbb{R}\}}$ for some $\theta_0\in\mathbb{R}$, then $\mathcal{I}(W,V,\theta_0)$ is a point. Lemma \ref{lem:similarity of O and I} states that $\mathcal{O}(W,V,\theta)$ is similar to $\mathcal{O}(W,V,\rho)$ for all $\theta,\rho\in\mathbb{R}$. Therefore if $|\mathcal{A}(\mathcal{O}(W,V,\theta_0))|=\max{\{|\mathcal{A}(\mathcal{O}(W,V,\theta))|:\theta\in\mathbb{R}\}}$, then $\ell(O_1,O_2)(\theta_0)\ge \ell(O_1,O_2)(\theta)$ for all $\theta\in\mathbb{R}$. Using notations in \ref{lem:similarity of O and I}, let the center of circle $C_i$ be $X_i$ for all $i\in\{1,2,\cdots,n\}$. We obtain
\[\ell(Y_1,Y_2)=\frac{1}{2}\ell(O_1,O_2)\]
where $Y_i$ is the foot of perpendicular from $X_i$ to $L(O_1,O_2)$ for $i\in\{1,2\}$. Therefore 
\[\ell(Y_1,Y_2)(\theta_0)\ge \ell(Y_1,Y_2)(\theta)\]
for all $\theta\in\mathbb{R}$. It implies $\ell(Y_1,Y_2)(\theta_0)$ is parallel to $\ell(X_1,X_2)$ and hence $L(I_1,I_2)(\theta_0)$ passes through $X$ (see figure $7$). Similar argument shows that $L(I_i,I_{i+1})(\theta_0)$ passes through $X$ for $i\ge 2$, that is, $\mathcal{I}(W,V,\theta_0)$ is a single point.
\end{proof}

\begin{corollary}
\label{cor:extreme of area of pedal triangles}
For triangles $V$, $W$ with positive orientation, if $\mathcal{A}(I(W,V,\theta_0))=0$ for some $\theta_0\in\mathbb{R}$, then 
\[\mathcal{A}(\mathcal{O}(W,V,\theta_0))=\max\{\mathcal{A}(\mathcal{O}(W,V,\theta)):\theta\in\mathbb{R}\}.\]
\end{corollary}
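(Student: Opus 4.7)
The plan is to combine the preceding theorem with an orientation argument. The preceding theorem already gives $|\mathcal{A}(\mathcal{O}(W,V,\theta_0))|=\max_\theta|\mathcal{A}(\mathcal{O}(W,V,\theta))|$; so the corollary reduces to showing that $\mathcal{A}(\mathcal{O}(W,V,\theta))\ge 0$ for every $\theta\in\mathbb{R}$, since in that case $\max_\theta|\mathcal{A}(\mathcal{O}(W,V,\theta))|=\max_\theta \mathcal{A}(\mathcal{O}(W,V,\theta))$ and both are attained at $\theta_0$. (Equivalently, one may go through Theorem \ref{thm:invariant of area}: from $\mathcal{A}(\mathcal{O}(\theta))+\mathcal{A}(\mathcal{I}(\theta))=c$ and the hypothesis at $\theta_0$ one gets $c=\mathcal{A}(\mathcal{O}(W,V,\theta_0))$, so the corollary is equivalent to $\mathcal{A}(\mathcal{I}(W,V,\theta))\ge 0$ for all $\theta$.)

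To see that $\mathcal{A}(\mathcal{O}(W,V,\theta))$ has a constant sign as $\theta$ varies, I would apply Lemma \ref{lem:similarity of O and I} twice to conclude $\mathcal{O}(W,V,\theta)\sim\mathcal{I}(W,V,\rho)\sim\mathcal{O}(W,V,\theta')$, so all outer triangles are mutually similar. The vertex maps $\theta\mapsto O_i(\theta)$ are continuous, and the triangles are never degenerate, since their interior angles are the fixed strictly positive numbers $\angle V_i$. Hence the signed area is continuous and nonvanishing, and therefore keeps a constant sign on $\mathbb{R}$.

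The main obstacle is to verify that this common sign is $+$ under the positive-orientation hypothesis on $V$ and $W$. The key geometric observation is that, by the definition of $\mathcal{O}(W,V,\theta)$, each side $L(O_i,O_{i+1})$ passes through $W_i$, so traversing $\mathcal{O}$ in the cyclic order $O_1\to O_2\to O_3$ meets the vertices of $W$ in the cyclic order $W_1,W_2,W_3$. This, combined with $W$ being positively oriented, forces $\mathcal{O}(W,V,\theta)$ to share the positive orientation; one can cross-check this through the matching of interior angles $\angle O_i=\angle V_i$ in the same cyclic order with the positively oriented $V$. With the sign identified, $\mathcal{A}(\mathcal{O}(W,V,\theta_0))=|\mathcal{A}(\mathcal{O}(W,V,\theta_0))|=\max_\theta|\mathcal{A}(\mathcal{O}(W,V,\theta))|=\max_\theta \mathcal{A}(\mathcal{O}(W,V,\theta))$, which completes the proof.
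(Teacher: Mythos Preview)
Your overall strategy---apply the preceding theorem and then remove the absolute values via an orientation argument---matches what the paper intends, but there are two genuine gaps.

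First, both the preceding theorem and Lemma~\ref{lem:similarity of O and I} have as hypothesis that $\mathcal{I}(W,V,\theta_0)$ is \emph{a point}, whereas the corollary only assumes $\mathcal{A}(\mathcal{I}(W,V,\theta_0))=0$. You invoke both results without bridging this. For triangles the bridge is short: the three side-lines of $\mathcal{I}(W,V,\theta)$ make the fixed pairwise angles $\angle V_i$, so whenever they are not concurrent they bound a triangle similar to $V$ with strictly nonzero area; hence area zero forces concurrency, i.e.\ a single point. This is exactly the triangle-specific fact flagged in the remark following the corollary, and it should appear explicitly in your argument.

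Second, your claim that $\mathcal{O}(W,V,\theta)$ is ``never degenerate'' is false, so the constant-sign argument via continuity and nonvanishing breaks down. Indeed, the defining lines of $\mathcal{I}(W,V,\theta)$ are obtained from those of $\mathcal{O}(W,V,\theta)$ by rotating each through a right angle about the corresponding $W_i$; comparing with the definition shows $\mathcal{I}(W,V,\theta)=\mathcal{O}(W,V,\theta\pm\pi/2)$. Thus if $\mathcal{I}(W,V,\theta_0)$ is a point, then $\mathcal{O}(W,V,\theta_0\pm\pi/2)$ is a point and $\mathcal{A}(\mathcal{O}(W,V,\cdot))$ vanishes there. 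The fixed angles $\angle V_i$ are angles between the side \emph{lines} and do not prevent the triangle from shrinking to a point. A cleaner repair is to note that you do not need $\mathcal{A}(\mathcal{O}(W,V,\theta))\ge 0$ for all $\theta$: once you know $|\mathcal{A}(\mathcal{O}(\theta_0))|=\max_\theta|\mathcal{A}(\mathcal{O}(\theta))|$, it suffices to show $\mathcal{A}(\mathcal{O}(W,V,\theta_0))\ge 0$, since then
\[
\mathcal{A}(\mathcal{O}(W,V,\theta))\le |\mathcal{A}(\mathcal{O}(W,V,\theta))|\le |\mathcal{A}(\mathcal{O}(W,V,\theta_0))|=\mathcal{A}(\mathcal{O}(W,V,\theta_0)).
\]
So the orientation check need only be carried out at the single parameter $\theta_0$, where $W$ is the pedal triangle of $\mathcal{O}(W,V,\theta_0)$; this is where the positive-orientation hypothesis on $V$ and $W$ should enter.
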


\begin{remark}
In general, for $n\ge 4$ corollary \ref{cor:extreme of area of pedal triangles} may fail to hold, because in this case, $\mathcal{A}(\mathcal{I}(W,V,\theta))=0$ is not equivalent to $\mathcal{I}(W,V,\theta)$ being a point.
\end{remark}

\newpage
\section{Equivalence class about Pedal Polygons}
In this section, we will assume that any successive three vertices of a polygon can not be collinear.

\begin{remark}
    From \ref{thm:pedal center}, we see that for any pair of triangles, there is a point $X$, pedal center, such that one is the pedal triangle of the other with respect to $X$. However, it is not true for $n$-gons with $n>3$. For example, the pedal quadrilateral of a square with respect to any point must have diagonals orthogonal to each others. Therefore, to study the pedal relations for any polygons, we must use a weaker definition.
\end{remark}

\begin{definition}
For an $n$-gon $V=(V_1 V_2\cdots V_n)$ and a sequence of points $S=\{X_i\}_{i=1}^m$, we define $\mathcal{P}^{(1)}(V,S)=\mathcal{P}(V,X_1)$ and inductively \[\mathcal{P}^{(i)}(V,S)=\mathcal{P}(\mathcal{P}^{(i-1)}(V,S),X_i)=(P^{(i)}_1 P^{(i)}_2\cdots P^{(i)}_n)\] 
for all integer $i\ge 1$ (see figure $8$). $\mathcal{P}^{(m)}(V,S)$ is called the \textit{pedal polygon of $V$ with respect to $S$} and denoted by $\mathcal{P}(V,S)$. 
\end{definition}

\begin{figure}[htp]
\begin{center}
\begin{minipage}[t]{0.48\textwidth}
\centering
\includegraphics[width=5.3cm]{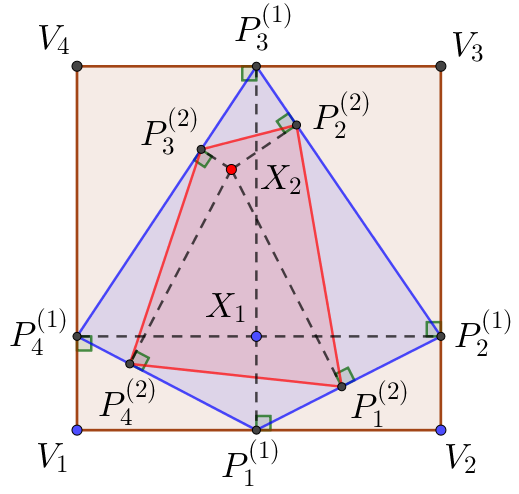}
\caption{Pedal polygon $\mathcal{P}(V,S)$}
\end{minipage}
\begin{minipage}[t]{0.48\textwidth}
\centering
\includegraphics[width=6cm]{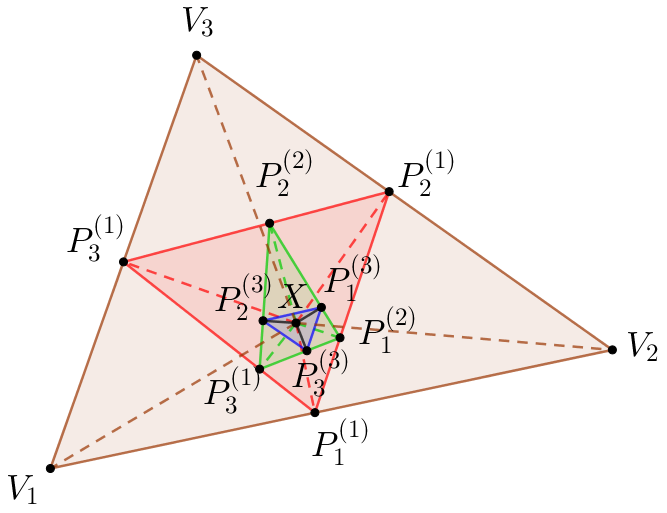}
\caption{$\mathcal{P}(V,\{X\}_{i=1}^3)$ for a triangle}
\end{minipage}
\end{center}
\end{figure}

\begin{lemma}
\label{lem:similarity of itself}
For an $n$-gon $V=(V_1 V_2\cdots V_n)$ and any point $X$, if $S=\{X_i\}_{i=1}^n$ where $X_i=X$ for all $i\in\{1,2,\cdots,n\}$, then $\mathcal{P}(V,S)$ is similar to $V$. 
\end{lemma}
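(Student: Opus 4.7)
The plan is to identify $\mathbb{R}^2$ with $\mathbb{C}$, place $X$ at the origin, and show that after $n$ iterations $P^{(n)}_j = \lambda\,V_j$ for every $j$, with a single complex scalar $\lambda$ independent of $j$. This exhibits $\mathcal{P}(V,S) = \mathcal{P}^{(n)}(V,S)$ as the image of $V$ under the spiral similarity $z \mapsto \lambda z$ centered at $X$, which yields the claim immediately.

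The backbone of the argument is a short two-line identity in complex form: \emph{if $L_1, L_2$ are two lines avoiding $0$ that meet at a point $V \ne 0$, and $A,B$ are the feet of the perpendiculars from $0$ to $L_1, L_2$, then the foot of the perpendicular from $0$ to the line through $A$ and $B$ equals $AB/V$.} I would prove this by writing each of the two lines in the form $\{z : \mathrm{Re}(\bar w z) = |w|^2\}$ (with $w \in \{A,B\}$), solving the resulting $2\times 2$ real-linear system to get $V = AB(\bar A - \bar B)/\bigl(i\,\mathrm{Im}(\bar A B)\bigr)$, and comparing with the standard formula $Q = i\,\mathrm{Im}(\bar A B)/(\bar A - \bar B)$ for the foot from $0$ to the line through $A$ and $B$. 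In the product $QV$ the conjugate-dependent factors cancel cleanly, leaving $QV = AB$.

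Applying this identity to the two sides of $\mathcal{P}^{(k)}(V,S)$ meeting at the vertex $P^{(k)}_{j+1}$ (whose feet of perpendiculars from $0$ are by definition $P^{(k+1)}_j$ and $P^{(k+1)}_{j+1}$), and observing that $P^{(k+2)}_j$ is exactly the foot from $0$ to the line through $P^{(k+1)}_j$ and $P^{(k+1)}_{j+1}$, one gets the recursion
\[
P^{(k+2)}_j \;=\; \frac{P^{(k+1)}_j\,P^{(k+1)}_{j+1}}{P^{(k)}_{j+1}}\qquad (k \ge 0),
\]
under the natural convention $P^{(0)}_j := V_j$. A routine induction on $k$ then yields the closed form
\[
P^{(k)}_j \;=\; \frac{\prod_{i=0}^{k-1} P^{(1)}_{j+i}}{\prod_{i=1}^{k-1} V_{j+i}} \qquad (k \ge 1).
\]

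Setting $k = n$, the cyclicity of the indices modulo $n$ collapses everything: the numerator $\prod_{i=0}^{n-1} P^{(1)}_{j+i}$ sweeps through every first-pedal vertex exactly once and so equals the $j$-independent constant $\Pi := \prod_{i=1}^{n} P^{(1)}_i$, while the denominator $\prod_{i=1}^{n-1} V_{j+i}$ picks up every $V_i$ except $V_j$ and therefore equals $\bigl(\prod_{i=1}^n V_i\bigr)/V_j$. Hence $P^{(n)}_j = \lambda\,V_j$ with $\lambda = \Pi/\prod_{i=1}^n V_i$, finishing the proof. I expect the only delicate step to be isolating the compact identity $QV = AB$; after that, the recursion, induction, and cyclic telescoping are purely mechanical. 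Degenerate configurations (for instance a vertex $V_j$ coinciding with $X$, which would zero out a denominator) can be absorbed by a general-position assumption plus continuity, or dealt with directly by noting that in such cases both sides of the target similarity collapse to $X$ consistently.
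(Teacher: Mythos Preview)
Your argument is correct. The identity $QV=AB$ holds exactly as you describe (the computation of $V$ and of $Q$ both check out, and the conjugate factors cancel), the two–step recursion follows by applying the identity at the vertex $P^{(k)}_{j+1}$, the closed form is a clean double induction, and the cyclic telescoping at $k=n$ gives $P^{(n)}_j=\lambda V_j$ with $\lambda=\bigl(\prod_i P^{(1)}_i\bigr)/\bigl(\prod_i V_i\bigr)$. Degeneracies are indeed harmless: the section already assumes no three consecutive vertices are collinear, and the remaining bad cases (some $V_j=X$, or two consecutive feet coinciding) are removable by continuity as you say.

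This is a genuinely different route from the paper's proof. The paper argues synthetically: since $X,\,P^{(k+1)}_{j-1},\,P^{(k)}_j,\,P^{(k+1)}_j$ lie on the circle with diameter $XP^{(k)}_j$, the inscribed–angle theorem gives $\angle XP^{(k)}_jP^{(k)}_{j+1}=\angle XP^{(k+1)}_{j-1}P^{(k+1)}_j$ (index shifts by $-1$) and $\angle XP^{(k)}_{j+1}P^{(k)}_j=\angle XP^{(k+1)}_{j+1}P^{(k+1)}_j$ (index fixed); iterating $n$ times brings both angles back to the same indices, so every triangle $(XV_iV_{i+1})$ is similar to $(XP^{(n)}_iP^{(n)}_{i+1})$. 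Your complex–number approach is essentially the algebraic shadow of the same cyclic–quadrilateral picture (the relation $QV=AB$ encodes both the power–of–a–point equality $|Q||V|=|A||B|$ and the angle identity the paper chases), but it buys you strictly more: you obtain the similarity as an explicit direct spiral $z\mapsto\lambda z$ centered at $X$, together with a closed formula for the ratio and the rotation angle, whereas the paper's angle chase yields only similarity of corresponding triangles. The paper's version, on the other hand, is lighter—no coordinates, no solving of linear systems—so it reads faster if one does not need the quantitative information.
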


\begin{proof}
It is easy to see
\begin{equation*}
\begin{cases}
\angle{X P^{(0)}_i P^{(0)}_{i+1}}=\angle{X P^{(1)}_{i-1} P^{(1)}_{i}}=\angle{X P^{(2)}_{i-2} P^{(2)}_{i-1}}=\cdots =\angle{X P^{(n)}_{i} P^{(n)}_{i+1}}\\
\angle{X P^{(0)}_{i+1} P^{(0)}_{i}}=\angle{X P^{(1)}_{i+1} P^{(1)}_{i}}=\angle{X P^{(2)}_{i+1} P^{(2)}_{i}}=\cdots =\angle{X P^{(n)}_{i+1} P^{(n)}_{i}}
\end{cases}
\label{cond:even}
\end{equation*}
for all $i\in\{1,2,\cdots,n\}$ which implies $(X P^{(0)}_i P^{(0)}_{i+1})=(X V_i V_{i+1})$ is similar to $(X P^{(n)}_i P^{(n)}_{i+1})$ for all $i\in\{1,2,\cdots,n\}$ (see figure $9$). Therefore $\mathcal{P}^{(n)}(V,S)$ is similar to $V$.
\end{proof}

\begin{definition}
For $n$-gons $V$ and $W$, we say $V$ and $W$ are \textit{pedal equivalent} if there is a sequence of points $S$ such that $\mathcal{P}(V,S)$ is similar to $W$. In this case, we denote $V\perp W$.  
\end{definition}

\begin{theorem}
\label{thm:equivalence relation}
The relation $\perp$ on set of $n$-gons is an equivalence relation. 
\end{theorem}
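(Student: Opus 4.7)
The plan is to verify reflexivity, transitivity, and symmetry in turn. The workhorse throughout is the similarity-equivariance of the pedal map: for any similarity $\phi$ of $\mathbb{R}^2$, $\mathcal{P}(\phi(V),\phi(X))=\phi(\mathcal{P}(V,X))$, since $\phi$ sends lines to lines and preserves perpendicularity. Reflexivity is immediate: pick any $X$ and apply Lemma \ref{lem:similarity of itself} with $S=\{X\}_{i=1}^n$ to get $\mathcal{P}(V,S)$ similar to $V$, hence $V\perp V$.

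For transitivity, suppose $\mathcal{P}(V,S_1)$ is similar to $W$ via a similarity $\phi$ with $\phi(\mathcal{P}(V,S_1))=W$, and $\mathcal{P}(W,S_2)$ is similar to $U$. Iterating the equivariance property point by point along $S_2$ gives $\mathcal{P}(W,S_2)=\phi(\mathcal{P}(V,S'))$, where $S'$ is the concatenation of $S_1$ with the image of $S_2$ under $\phi^{-1}$. Cancelling $\phi$ shows $\mathcal{P}(V,S')$ is similar to $U$, so $V\perp U$.

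Symmetry is where the main work goes. The key observation is that Lemma \ref{lem:similarity of itself} acts as a similarity-inverse device: $n$ pedals based at a common point $X$ return the polygon up to similarity, so $n-1$ such pedals play the role of an inverse of $\mathcal{P}(\cdot,X)$. Given $\mathcal{P}(V,\{X_i\}_{i=1}^k)$ similar to $W$ via a similarity $\psi$, equivariance reduces the problem to finding $T_0$ with $\mathcal{P}(\mathcal{P}(V,\{X_i\}_{i=1}^k),T_0)$ similar to $V$; then $T=\psi(T_0)$ witnesses $W\perp V$. Set $V_0=V$ and $V_i=\mathcal{P}(V_{i-1},X_i)$, so $V_k=\mathcal{P}(V,\{X_i\}_{i=1}^k)$. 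Applying $n-1$ pedals to $V_k$ based at $X_k$ gives $\mathcal{P}(V_{k-1},\{X_k\}_{i=1}^n)$, which by Lemma \ref{lem:similarity of itself} is similar to $V_{k-1}$ via some $\phi_{k-1}$; applying $n-1$ further pedals based at $\phi_{k-1}(X_{k-1})$ then yields a polygon similar to $V_{k-2}$; and so on. After $k$ batches the result is similar to $V_0=V$, producing a $T_0$ of length $k(n-1)$.

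The hardest step is this symmetry argument, specifically the bookkeeping of the similarities $\phi_j$: intermediate polygons coincide with $V_j$ only up to similarity, so each successive batch of pedals must be based at the appropriately transported point. I would handle this by a clean induction on $k$ that explicitly names the composite similarity after each batch. A subsidiary check, automatic from the standing assumption at the start of Section 3 that no three consecutive vertices are collinear, is that all intermediate pedal polygons remain well-defined.
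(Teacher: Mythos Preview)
Your proposal is correct and follows essentially the same approach as the paper: reflexivity via Lemma~\ref{lem:similarity of itself}, transitivity by concatenation (with similarity-equivariance of the pedal map), and symmetry by peeling off one pedal step at a time using the $n$-fold periodicity of Lemma~\ref{lem:similarity of itself}. The only cosmetic difference is that the paper phrases the symmetry argument as showing $W\perp\mathcal{P}^{(m-1)}(V,S)$ and then invoking transitivity to descend, whereas you assemble the full witnessing sequence $T_0$ explicitly by tracking the intermediate similarities $\phi_j$; your version is more explicit about the equivariance bookkeeping that the paper leaves implicit.
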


\begin{proof}
It is easy to see $\perp$ is transitive and lemma \ref{lem:similarity of itself} implies $\perp$ is reflexive. For symmetry, if $V\perp W$ then there is a sequence $S=\{X_i\}_{i=1}^m$ such that $\mathcal{P}(V,S)$ is similar to $W$. Notice that $ \mathcal{P}( \mathcal{P}^{(m-1)}(V,S) ,X_m)$ is similar to $W$. Thus in view of \ref{lem:similarity of itself}, we see $W \perp  \mathcal{P}^{(m-1)}(V,S)$. Repeating this process and make use of transitivity of $\perp$, we conclude $W \perp V$.
\end{proof}

\newpage
\begin{corollary}
Let $G(3)$ be the set of triangles, then $|G(3)/{\perp}|=1$.
\end{corollary}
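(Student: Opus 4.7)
The plan is to observe that this corollary is essentially an immediate consequence of Theorem \ref{thm:pedal center} combined with Theorem \ref{thm:equivalence relation}. What needs to be shown is that any two triangles lie in the same $\perp$-equivalence class, i.e., for arbitrary triangles $V, W \in G(3)$ we have $V \perp W$.

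First I would fix any two triangles $V = (V_1 V_2 V_3)$ and $W = (W_1 W_2 W_3)$. By Theorem \ref{thm:pedal center}, there exists a point $X \in \mathbb{R}^2$ such that the pedal triangle $\mathcal{P}(V, X)$ is similar to $W$. Now I would take the length-one sequence $S = \{X_1\}$ with $X_1 = X$. Then by the definition of iterated pedal polygons, $\mathcal{P}(V, S) = \mathcal{P}^{(1)}(V, S) = \mathcal{P}(V, X_1) = \mathcal{P}(V, X)$, which is similar to $W$. By the definition of pedal equivalence, this gives $V \perp W$.

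Since $V, W \in G(3)$ were arbitrary, any two triangles are related by $\perp$. Combined with Theorem \ref{thm:equivalence relation}, which shows $\perp$ is an equivalence relation on the set of $n$-gons, we conclude that $G(3)$ consists of a single equivalence class, so $|G(3)/{\perp}| = 1$.

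There is no real obstacle here: the content of the corollary is already packaged in the pedal center existence theorem, and the role of $\perp$ being an equivalence relation is only needed to formally make sense of the quotient. The proof is essentially a one-line invocation of Theorem \ref{thm:pedal center} with the sequence $S$ of length one.
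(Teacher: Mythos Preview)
Your proof is correct and follows essentially the same approach as the paper: invoke Theorem~\ref{thm:pedal center} to show $V\perp W$ for arbitrary triangles (via a length-one sequence $S=\{X\}$), and cite Theorem~\ref{thm:equivalence relation} to make sense of the quotient. The paper's proof is just a terser version of what you wrote.
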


\begin{proof}
Theorem \ref{thm:equivalence relation} states that ``$\perp$'' is an equivalence relation on $G(3)$, and \ref{thm:pedal center} implies $V\perp W$ for all $V,W\in G(3)$.
\end{proof}

\begin{lemma}
\label{lem:square}
For any square $V=(V_1 V_2 V_3 V_4)$ and rectangle $W=(W_1 W_2 W_3 W_4)$, we have $V\perp W$.
\end{lemma}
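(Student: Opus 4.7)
The plan is to exhibit a sequence $S$ of length two such that $\mathcal{P}(W, S)$ is a square. Since all squares are similar, such a polygon is similar to $V$, giving $W \perp V$; by the symmetry of $\perp$ (Theorem \ref{thm:equivalence relation}), $V \perp W$.

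Place $W$ in the standard position $W_1 = (0,0)$, $W_2 = (P,0)$, $W_3 = (P,Q)$, $W_4 = (0,Q)$ and assume without loss of generality $P \geq Q > 0$ (the case $P < Q$ is handled by an analogous construction on the vertical axis of symmetry $x = P/2$). Take both pedal points on the horizontal axis of symmetry $y = Q/2$: set $X_1 = (u, Q/2)$ and $X_2 = (s, Q/2)$ with $u, s \in (0, P)$ to be determined. The first pedal is $T := \mathcal{P}(W, X_1) = \bigl((u,0), (P, Q/2), (u, Q), (0, Q/2)\bigr)$, a kite reflection-symmetric about $y = Q/2$. Because the reflection $(x, y) \mapsto (x, Q - y)$ fixes $X_2$ and maps $T$ to itself, it permutes the vertices of $R := \mathcal{P}(T, X_2)$ as $R_1 \leftrightarrow R_2$ and $R_3 \leftrightarrow R_4$; consequently $R_1$ and $R_2$ share a common $x$-coordinate with $y$-coordinates summing to $Q$, and the same holds for $R_3$ and $R_4$. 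Under these symmetries the parallelogram condition $R_1 + R_3 = R_2 + R_4$ reduces to the single scalar equation that the $y$-coordinates of $R_1$ and $R_3$ also sum to $Q$, and whenever it holds, $R$ is automatically an axis-aligned rectangle. A direct computation with the foot-of-perpendicular formula for the four sides of $T$ shows that this condition is equivalent to
\[
    s \;=\; \frac{(P - u)\bigl(u^{2} + Q^{2}/4\bigr)}{u(P - u) + Q^{2}/4},
\]
under which the resulting rectangle $R$ has height-to-width ratio equal to $\dfrac{4u(P - u)}{PQ}$. Setting this equal to $1$ gives $4u(P - u) = PQ$, whose roots $u = (P \pm \sqrt{P(P-Q)})/2$ are real and lie in $(0, P)$ precisely because $P \geq Q$. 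At either such $u$, the pedal $R$ is a square, finishing the construction.

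The main obstacle will be the algebraic simplification underlying both the displayed formula for $s$ and the aspect ratio: one must verify that the width and height of $R$ acquire a common denominator of $u(P - u) + Q^{2}/4$ so that their ratio collapses to $4u(P - u)/(PQ)$. The symmetric placement of $X_1$ and $X_2$ on the axis $y = Q/2$ is engineered precisely to make this tractable; without it, demanding rectangularity would impose both a parallelogram condition and a separate right-angle condition on a four-parameter family of $(X_1, X_2)$, which would be much harder to control directly.
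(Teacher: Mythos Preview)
Your proof is correct. The overall architecture---two pedal steps along an axis of symmetry, passing through a kite, landing on a rectangle whose aspect ratio is a quadratic in the first pedal-point parameter---is exactly the mechanism the paper uses. The difference is only in direction: the paper starts from the square $V$, takes $X_1$ on a perpendicular bisector, and shows that the aspect ratio $r(t)=t(2-t)$ of the resulting rectangle sweeps out all of $(0,1]$, hitting any prescribed $W$ directly and thereby proving $V\perp W$ without appeal to symmetry. You instead start from $W$, produce a square, and then invoke Theorem~\ref{thm:equivalence relation} to flip $W\perp V$ into $V\perp W$. Your aspect-ratio formula $4u(P-u)/(PQ)$ specializes to $t(2-t)$ under the change of variable $u=Pt/2$ when $P=Q$, so the two computations are really the same curve read in opposite directions. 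The paper's orientation is marginally more self-contained (no need for symmetry of $\perp$), while yours makes the target explicit (solve $4u(P-u)=PQ$); neither buys anything substantive over the other.
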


\begin{proof}
For a point $X_1$ on the perpendicular bisector of $\ell(V_1,V_4)$, without loss of generality, assume $X_1=(0,0)$, $V_1=(-1,t)$, $V_2=(-1,t-2)$, $V_3=(1,t-2)$, $V_4=(1,t)$, $P^{(1)}_1=(-1,0)$, $P^{(1)}_3=(1,0)$, $P^{(1)}_4=(0,t)$, $P^{(1)}_2=(0,t-2)$ for some $0<t<2$ where $(P^{(1)}_1 P^{(1)}_2 P^{(1)}_3 P^{(1)}_4)=\mathcal{P}(V,X_1)$ (see figure $10$). It is obvious that there is a unique point $X_2=(0,2(t-1)/(-t^2+2t+1))$ such that $\mathcal{P}(V,\{X_i\}_{i=1}^2)$ is a rectangle (see figure $11$). Moreover, the ratio of length and width of $\mathcal{P}(V,\{X_i\}_{i=1}^2)$ is $r(t)=-t(t-2)$. Note that the range of function $r:(0,2)\to\mathbb{R}$ is $(0,1]$ which implies that for any rectangle $W$, there is a $t_0\in(0,2)$ such that $\mathcal{P}(V,\{X_i\}_{i=1}^2)$ is similar to $W$.
\end{proof}

\begin{figure}[htp]
\begin{center}
\begin{minipage}[t]{0.48\textwidth}
\centering
\includegraphics[width=4.5cm]{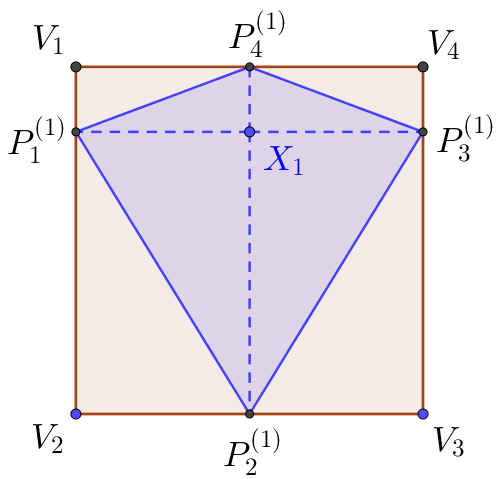}
\caption{Pedal polygon $\mathcal{P}(V,X_1)$}
\end{minipage}
\begin{minipage}[t]{0.48\textwidth}
\centering
\includegraphics[width=4.5cm]{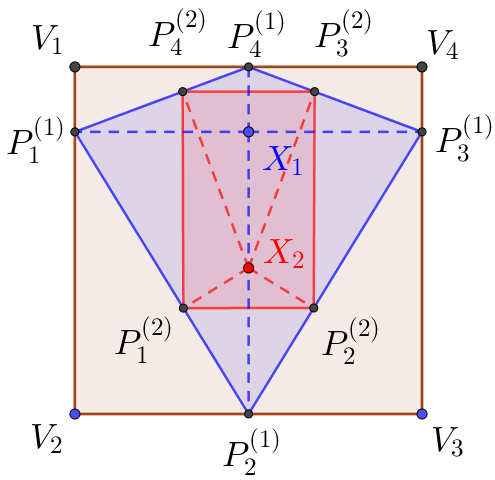}
\caption{Pedal polygon $\mathcal{P}^{(2)}(V,S)$}
\end{minipage}
\end{center}
\end{figure}

\begin{lemma}
\label{lem:kite}
For any kite $K$, isosceles trapezoid $T$, and square $V$, $K\perp V$ and $T\perp V$.
\end{lemma}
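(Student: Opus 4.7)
The plan is to combine Lemma \ref{lem:square} with Theorem \ref{thm:equivalence relation}: Lemma \ref{lem:square} gives $V \perp W$ for every rectangle $W$, and since $\perp$ is an equivalence relation it suffices to show that $K$ and $T$ are each pedal-equivalent to some rectangle. Transitivity of $\perp$ then yields $K \perp V$ and $T \perp V$.

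For the kite, I would place $K$ so that its axis of symmetry is the $y$-axis, with $K_1 = (0, a)$, $K_2 = (c, 0)$, $K_3 = (0, -b)$, $K_4 = (-c, 0)$ for some $a, b, c > 0$. Take $X_1 = (0, h)$ on the axis. The axial symmetry forces $\mathcal{P}(K, X_1) = (P_1 P_2 P_3 P_4)$ to be symmetric across the $y$-axis, with $P_4$ the reflection of $P_1$ and $P_3$ the reflection of $P_2$; in particular the sides $P_4 P_1$ and $P_2 P_3$ come out horizontal, so $\mathcal{P}(K, X_1)$ is automatically an isosceles trapezoid. It becomes a rectangle precisely when $P_1$ and $P_2$ share their $x$-coordinate, which is a single linear condition in $h$. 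Working out the projections of $(0, h)$ onto the sides $L(K_1, K_2)$ and $L(K_2, K_3)$ and equating the two $x$-coordinates gives the unique solution
\[
h_0 = \frac{c^2(a - b)}{ab + c^2}.
\]
With $X_1 = (0, h_0)$ the pedal polygon is a non-degenerate rectangle $R$, and so $K \perp R \perp V$.

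For the isosceles trapezoid, I would first reduce to the kite case. Orient $T$ so that its axis of symmetry is the $y$-axis (the two parallel sides are then horizontal) and pedal from a generic point $Y = (0, k)$ on this axis. The feet of the perpendiculars onto the two parallel sides lie on the $y$-axis, while those onto the two oblique sides are reflections of one another across the $y$-axis. Hence $\mathcal{P}(T, Y)$ is a symmetric quadrilateral with two opposite vertices on the axis, i.e.\ a kite $K'$. Applying the kite step above to $K'$ then produces a rectangle $R'$, and we obtain $T \perp K' \perp R' \perp V$.

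The main thing to watch is non-degeneracy: the intermediate kite produced from $T$, and the pedal rectangles in both steps, must be non-degenerate (no three successive vertices collinear) so that the next pedal operation is well-defined. Since these degeneracies correspond to only isolated values of the one free parameter (the height of the pedal point on the axis), a generic choice avoids them, and the only substantial computation in the proof is the projection formula used to derive $h_0$ in the kite step.
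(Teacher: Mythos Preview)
Your argument is correct. The overall strategy---reduce both shapes to a rectangle and then invoke Lemma~\ref{lem:square} together with the equivalence relation---is exactly the paper's, but you run the pedal constructions in the opposite direction. The paper observes that every kite is the \emph{pedal} polygon of some rectangle (equivalently, the antipedal of a kite with respect to the intersection of its diagonals is a rectangle, since the four antipedal sides are perpendicular to the two orthogonal diagonals), and that every isosceles trapezoid is the pedal polygon of some kite (the antipedal from any point on the axis of symmetry has two opposite vertices on that axis). You instead pedal \emph{from} the kite to a rectangle and \emph{from} the trapezoid to a kite. Both directions are legitimate once $\perp$ is known to be symmetric; the trade-off is that the paper's antipedal-of-kite step needs no computation at all, whereas your route requires solving for the height $h_0=c^2(a-b)/(ab+c^2)$. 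On the other hand, your trapezoid-to-kite step is just as immediate as the paper's kite-to-trapezoid step, and your explicit formula for $h_0$ has the mild advantage of making the non-degeneracy of the resulting rectangle checkable directly.
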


\begin{proof}
Any kite is a pedal polygon of a rectangle, and any isosceles trapezoid is a pedal polygon of some kites (see figure $12$ and $13$). This observation together with \ref{thm:equivalence relation} and \ref{lem:square} finishes the proof.    
\end{proof}

\begin{figure}[htp]
\begin{center}
\begin{minipage}[t]{0.48\textwidth}
\centering
\includegraphics[width=4.3cm]{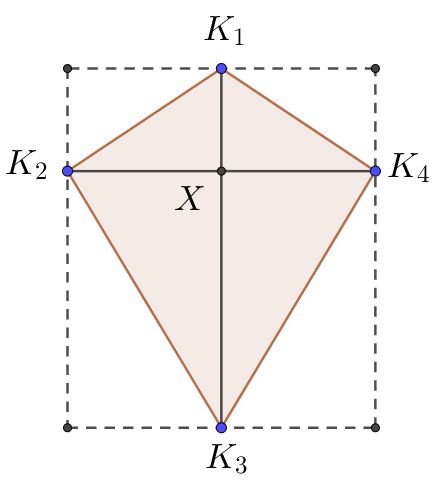}
\caption{Antipedal polygon of $K$}
\end{minipage}
\begin{minipage}[t]{0.48\textwidth}
\centering
\includegraphics[width=4.1cm]{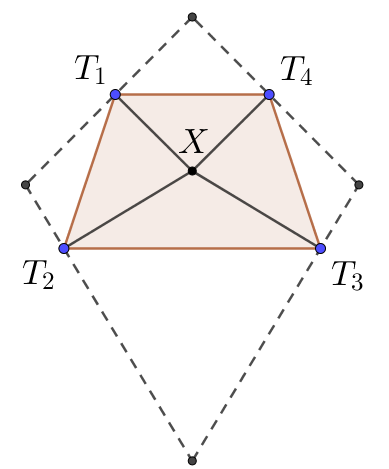}
\caption{Antipedal polygon of $T$}
\end{minipage}
\end{center}
\end{figure}

\begin{remark}
Lemma \ref{lem:square} and \ref{lem:kite} state that all rectangles, kites, and isosceles trapezoids are in the  same equivalence class $[V]$ where $V$ is a square. 
\end{remark}

\begin{theorem}
Let $G(4)$ be the set of quadrilaterals, then $|G(4)/{\perp}|=1$.
\end{theorem}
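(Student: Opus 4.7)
The plan is to show that every quadrilateral $Q$ is pedal-equivalent to an isosceles trapezoid. By Lemmas \ref{lem:square} and \ref{lem:kite}, all isosceles trapezoids already lie in the equivalence class $[V]$ of a square, so the theorem will follow from the transitivity and symmetry of $\perp$ established in Theorem \ref{thm:equivalence relation}.

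To exhibit such a pedal-equivalence I would exploit the fact that, for the same base point $X$, the antipedal construction inverts the pedal construction: for any polygon $V$ one checks that $\mathcal{P}(\mathcal{AP}(V,X),X)=V$, because the $i$-th side of $\mathcal{AP}(V,X)$ is perpendicular to $L(X,V_i)$ and passes through $V_i$, so the foot of the perpendicular from $X$ to that side is $V_i$ itself. Together with the symmetry of $\perp$, this reduces the task to producing a point $X$ — or a short sequence of points — such that the iterated antipedal of $Q$ is similar to an isosceles trapezoid.

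The concrete single-step construction I would attempt is as follows. The $i$-th side of $\mathcal{AP}(Q,X)$ is perpendicular to $L(X,Q_i)$, so the opposite sides $A_1A_2$ and $A_3A_4$ are parallel precisely when $L(X,Q_1)$ and $L(X,Q_3)$ coincide, i.e.\ when $X$ lies on the diagonal $L(Q_1,Q_3)$ of $Q$. This pins $X$ to a one-parameter family; within it, one further equation equating the lengths of the two non-parallel sides determines (at most finitely many) candidate points $X$ at which $\mathcal{AP}(Q,X)$ is an isosceles trapezoid. A dimension count backs this up: isosceles trapezoids contribute $2$ parameters up to similarity, and the antipedal construction contributes the remaining $2$ parameters needed to cover every similarity class of quadrilateral. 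If the one-step construction fails for some $Q$, a preliminary antipedal step with an auxiliary point $X_0$ moves $Q$ into general position, after which the main construction applies to $\mathcal{AP}(Q,X_0)$.

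The main obstacle is upgrading this heuristic parameter count into a rigorous existence proof valid for every $Q$. Concretely, one must (i) check that the real polynomial system encoding the parallelism and equal-length conditions admits a real solution for all initial data, (ii) ensure that no intermediate polygon violates the standing non-collinearity assumption on three successive vertices, and (iii) confirm that the resulting quadrilateral is a genuine (convex) isosceles trapezoid and not a self-intersecting configuration satisfying the same side-relations. I expect (i) and (iii) can be handled by placing $Q$ in normalized coordinates and writing the resulting low-degree equations for the coordinates of $X$ explicitly, in the spirit of the calculation carried out in Lemma \ref{lem:square}, while (ii) is a measure-zero issue that can be removed by a continuity and perturbation argument on the pedal map.
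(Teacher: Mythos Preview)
Your overall strategy coincides with the paper's: show that every quadrilateral is pedal-equivalent to an isosceles trapezoid by taking an antipedal with base point on a diagonal, then invoke Lemmas \ref{lem:square} and \ref{lem:kite}. The paper, however, resolves your acknowledged gap (i) with one clean observation rather than a polynomial-system analysis. Once $X\in L(W_1,W_3)$, the sides of $\mathcal{AP}(W,X)$ through $W_1$ and $W_3$ are both orthogonal to that diagonal and hence parallel, while the remaining two sides, being orthogonal to $\overrightarrow{XW_2}$ and $\overrightarrow{XW_4}$, meet the parallel sides at precisely the angles that $\overrightarrow{XW_2}$ and $\overrightarrow{XW_4}$ make with the diagonal. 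So instead of your equal-length equation the paper imposes the equivalent condition that these two angles agree; for a simple quadrilateral ($W_2$ and $W_4$ on opposite sides of $L(W_1,W_3)$) such an $X$ exists by an intermediate-value argument along the diagonal, and the antipedal is then automatically an isosceles trapezoid, disposing of your concerns (i) and (iii) simultaneously.

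For the non-simple case the paper is also more concrete than your perturbation sketch: if $W'$ self-intersects at $I$, some triangle $(IW'_iW'_{i+1})$ is non-degenerate, and taking $Y$ to be its incenter makes $\mathcal{P}(W',Y)$ simple, reducing to the previous paragraph. Note that this is a \emph{pedal} step, not an antipedal one; your suggestion of a preliminary antipedal with an auxiliary $X_0$ would not obviously undo a self-crossing.
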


\begin{proof}
For a simple quadrilateral $W=(W_1 W_2 W_3 W_4)$, there is a point $X$ on $L(W_1,W_3)$ such that the angle between $\overrightarrow{X W_2}$ and $\overrightarrow{W_1 W_3}$ is equal to the one between $\overrightarrow{X W_4}$ and $\overrightarrow{W_1 W_3}$ so that antipedal polygon $\mathcal{AP}(W,X)$ is an isosceles trapezoid (see figure $14$). Finally, for a non-simple quadrilateral $W'=(W'_1 W'_2 W'_3 W'_4)$, let $I$ be the self-intersection point. Notice that there is some $i$ such that $(I W'_i W'_{i+1})$ is a non-degenerated triangle and let $Y$ be its incenter. Then $\mathcal{P}(W',Y)$ is a simple quadrilateral, and the problem is reduced to the case for simple quadrilaterals (see figure $15$). 
\end{proof}

\begin{figure}[htp]
\begin{center}
\begin{minipage}[t]{0.38\textwidth}
\centering
\includegraphics[width=4.2cm]{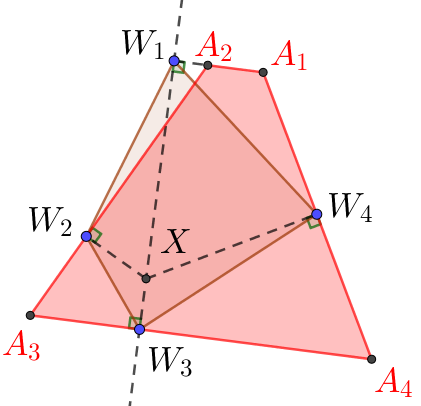}
\caption{Antipedal isosceles trapezoid}
\end{minipage}
\begin{minipage}[t]{0.48\textwidth}
\centering
\includegraphics[width=4.3cm]{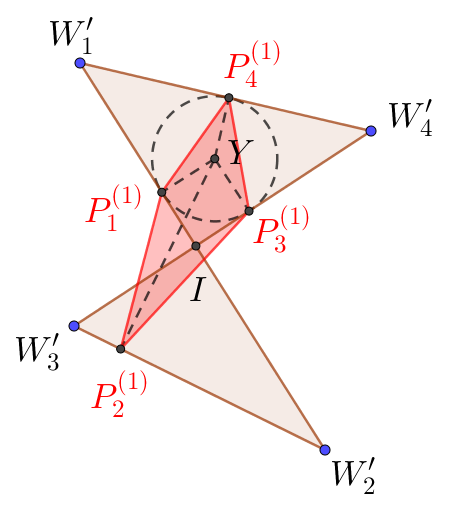}
\caption{Simple pedal polygon}
\end{minipage}
\end{center}
\end{figure}

\begin{conjecture}
For integer $n\ge 5$, if $G(n)$ is the set of $n$-gons, then $|G(n)/{\perp}|=1$.
\end{conjecture}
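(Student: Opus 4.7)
Our plan is to prove the stronger statement that every $n$-gon $V$ is pedal equivalent to the regular $n$-gon $R_n$; by transitivity of $\perp$ (Theorem~\ref{thm:equivalence relation}) this forces $|G(n)/\perp|=1$. A parameter count motivates the approach: the shape space $\mathcal{M}_n$ of $n$-gons modulo similarity has real dimension $2n-4$, while a sequence $S=(X_1,\ldots,X_n)$ of $n$ pedal centers carries $2n$ real parameters. Lemma~\ref{lem:similarity of itself} furnishes a natural anchor, since if $X_1=\cdots=X_n=X^\ast$ then $\mathcal{P}(V,S)$ is similar to $V$. Thus the shape map
\[\Phi\colon (\mathbb{R}^2)^n \to \mathcal{M}_n,\qquad (X_1,\ldots,X_n)\mapsto [\mathcal{P}(V,(X_1,\ldots,X_n))]\]
sends the diagonal to $[V]$, providing a convenient base point around which to linearize.

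The core technical step is to show that $\Phi$ is surjective onto $\mathcal{M}_n$. Writing $X_i=X^\ast+\varepsilon\xi_i$ and computing the first-order variation of the shape of $\mathcal{P}(V,S)$, each $\xi_i\in\mathbb{R}^2$ contributes a tangent vector in $T_{[V]}\mathcal{M}_n$; the task is to prove that, for a generic anchor $X^\ast$, the $2n$ such directions span this $(2n-4)$-dimensional space. Granted this, the implicit function theorem yields a full neighborhood of $[V]$ inside the equivalence class $[V]_\perp\subseteq\mathcal{M}_n$, so every $\perp$-class is open, hence clopen, hence (by connectedness of $\mathcal{M}_n$ on each orientation component) unique. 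A complementary angle of attack, in the spirit of Lemmas~\ref{lem:square}--\ref{lem:kite}, is inductive: starting from the settled $n=4$ case, one would try to find a pedal or antipedal step that collapses an ``ear'' of the $n$-gon onto three consecutive vertices of a prescribed $(n-1)$-gon, reducing the problem to the previous stage; the antipedal construction that produced isosceles trapezoids inside the quadrilateral proof is the template.

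The main obstacle is establishing the rank-$(2n-4)$ condition on $d\Phi$. Iterated pedal maps are highly nonlinear, and the tangent vector produced by perturbing $X_i$ depends on all pedal feet generated by $X_1,\ldots,X_{i-1}$; the Jacobian must be set up in coordinates — for instance via a complex-number representation in which the $i$-th pedal foot is an explicit affine function of $X_i$ with coefficients depending on $X_1,\ldots,X_{i-1}$ — and its rank verified by an explicit, if somewhat lengthy, linear-algebra computation. A secondary difficulty, paralleling the non-simple quadrilateral case, is handling self-intersecting and degenerate $n$-gons; we would expect that a preliminary pedal with respect to the incenter of a triangle formed by three suitable vertices of $V$ converts any $n$-gon into a simple one, reducing the argument to the generic situation where the parameter-count strategy has its best chance of working.
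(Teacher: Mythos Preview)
The paper does not prove this statement: it is stated as a \textbf{Conjecture} at the very end, with no argument offered. So there is no ``paper's own proof'' to compare against; your proposal is an attempt at an open problem, and it should be assessed as such.

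As a strategy your outline is coherent, but it is not a proof. You explicitly flag the crucial step --- that $d\Phi$ has rank $2n-4$ at the diagonal anchor --- and then do not carry it out. Everything hinges on that computation: without it, the implicit function theorem gives you nothing, and the ``each class is open $\Rightarrow$ clopen $\Rightarrow$ unique by connectedness'' chain never begins. The parameter count $2n\ge 2n-4$ is necessary but far from sufficient; iterated pedal constructions have a great deal of built-in redundancy (for instance, the two-dimensional family $X_1=\cdots=X_n=X^\ast$ already collapses to a single shape by Lemma~\ref{lem:similarity of itself}, so the diagonal direction lies in the kernel of $d\Phi$), and it is not at all obvious that the remaining directions fill out $T_{[V]}\mathcal{M}_n$. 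Until that Jacobian is actually written down and its rank verified for at least one $X^\ast$, this is a plan rather than an argument.

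There are also structural issues you pass over too quickly. The paper works in Section~3 under the standing hypothesis that no three consecutive vertices are collinear; the shape space $\mathcal{M}_n$ cut out by that condition need not be connected for $n\ge 5$, so ``connectedness of $\mathcal{M}_n$ on each orientation component'' is an assertion, not a fact. Your inductive alternative (``collapse an ear to reduce to an $(n-1)$-gon'') is not well-posed as stated: pedal and antipedal operations preserve $n$, so you would need to explain precisely what reduction you have in mind. Finally, your remedy for non-simple polygons --- take a pedal with respect to the incenter of a triangle formed by three vertices --- does not obviously produce a simple $n$-gon for $n\ge 5$; in the paper's $n=4$ case the incenter trick works because the self-intersection creates a genuine triangle whose three sides are sides of the quadrilateral, a coincidence that does not persist for larger $n$.
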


\medskip
\bibliographystyle{alpha}
\bibliography{references}

\end{document}